\newtheorem{theorem}{Theorem}[section]
\newtheorem{lemma}[theorem]{Lemma}
\theoremstyle{definition}
\newtheorem{definition}[theorem]{Definition}
\newtheorem{example}[theorem]{Example}
\theoremstyle{remark}
\newtheorem{remark}[theorem]{Remark}
\numberwithin{equation}{section}
\begin{document}

\title{A new formula for $\zeta(s)$}

\author{Chenfeng He}

\address{Department of Mathematics, Zhejiang University, Hangzhou, 310027 China }

\begin{abstract}
In this paper, by introducing a new operation in the vector space of analytic functions, the author presents a method for derivating the well-known formulas: $\zeta(1-k)=-\frac{B_k}{k}$ and $\zeta(1-n,a)=-\frac{B_n(a)}{n}$ , where $\zeta$, $\zeta(1-n,a)$ denote the Riemann zeta function and  the Hurwitz zeta function respectively. $B_k$ is the $k$-th Bernoulli number. Also the author steps further to deduce some identities related to Bernoulli number and Bernoulli polynomial. Moreover, when combining the operation with forward difference, we can show  a new formula for Riemann zeta function, i.e.
\[\zeta(s)=e\sum_{n=0}^{\infty}\sum_{i=0}^{n}(-1)^{n-i}\frac{1}{(n-i)!(1+i)^{s}}.\] \\
Keywords: Bernoulli number, Riemann zeta function, finite difference, convolution.
\end{abstract}
\maketitle

\section{introduction}
The Bernoulli numbers are rooted in the early history of the computation of sums of integer powers. They are mysterious and very important in mathematics. We will define a new operation motivated by \cite{4} so that it may unveil a part of its mystery.

	Let's consider the vector space of all analytic functions defined in a open set containing $0$,  and each of its element can  be uniquely represented as a Taylor series $\sum_{n=0}^{\infty}a_n\frac{z^{n}}{n!}$. If we consider the set \{$\frac{z^n}{n!}\mid n=0,1,2,...$\} to be a basis of this vector space over $\mathbb{C}$, then we can use a sequence $ (a_0,a_1,a_2,\cdots)$ denoted $\boldsymbol{a}$ to replace the function.  Also for the $n$-th component we use ($\boldsymbol{a})_n=a_n$ to pick it out. In this vector space addtion and scalar multiplication are defined as usual, which is $\boldsymbol{a} + \boldsymbol{b}=(a_0+b_0,a_1+b_1,a_2+b_2,\cdots), \ \lambda\boldsymbol{a}=(\lambda a_0,\lambda a_1,\lambda a_2,\cdots)$.
\begin{definition}
We introduce an operation between two vectors called convolution denoted  $\star$, and define it by its $n$-th coefficient  \[(\boldsymbol{a}\star\boldsymbol{b})_n=\sum^{n}_{k=0}\binom{n}{k}a_kb_{n-k}\]\end{definition}
This can be understood as multiplication of two series or  two analytic functions. We have
\begin{lemma}	
The convolution $\star$ satisfies the following rules:

\begin{enumerate}
\item \textbf{Commutative}: 
$\boldsymbol{a}\star\boldsymbol{b}=\boldsymbol{b}\star\boldsymbol{a}$

\item \textbf{Associative} : $
(\boldsymbol{a}\star\boldsymbol{b})\star\boldsymbol{c}=\boldsymbol{a}\star(\boldsymbol{b}\star\boldsymbol{c})$

\item \textbf{Distributive}: $
\boldsymbol{a}\star(\boldsymbol{b}+\boldsymbol{c})=\boldsymbol{a}\star\boldsymbol{b}+\boldsymbol{a}\star\boldsymbol{c}$

\item $\boldsymbol{a}\star \lambda\boldsymbol{b}=\lambda\boldsymbol{a}\star \boldsymbol{b}=\lambda(\boldsymbol{a}\star\boldsymbol{b})$
\end{enumerate}
\end{lemma}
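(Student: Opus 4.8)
The plan is to deduce all four properties from a single structural observation: the convolution $\star$ is nothing but ordinary multiplication of functions transported through the chosen basis. Let $\Phi$ denote the linear bijection that sends a sequence $\boldsymbol{a}=(a_0,a_1,\dots)$ to the analytic function $f_{\boldsymbol{a}}(z)=\sum_{n\ge 0}a_nz^n/n!$ (well defined and bijective by the uniqueness of Taylor expansions noted above). If $f_{\boldsymbol{a}}$ and $f_{\boldsymbol{b}}$ converge in neighbourhoods of $0$, their product is again analytic in a neighbourhood of $0$, and the Cauchy product formula gives $f_{\boldsymbol{a}}(z)f_{\boldsymbol{b}}(z)=\sum_{n\ge0}\bigl(\sum_{k=0}^n a_kb_{n-k}/(k!\,(n-k)!)\bigr)z^n=\sum_{n\ge0}\bigl(\sum_{k=0}^n\binom{n}{k}a_kb_{n-k}\bigr)z^n/n!$, that is $\Phi(\boldsymbol{a}\star\boldsymbol{b})=\Phi(\boldsymbol{a})\,\Phi(\boldsymbol{b})$. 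Since pointwise multiplication of functions is commutative, associative, distributive over addition, and satisfies $(\lambda f)g=f(\lambda g)=\lambda(fg)$, and since $\Phi$ is a linear bijection, applying $\Phi^{-1}$ to these identities yields exactly items (1)--(4).

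For completeness, and to avoid any appeal to convergence, I would also indicate how each item follows straight from the defining formula. Commutativity follows by the substitution $j=n-k$ together with $\binom{n}{k}=\binom{n}{n-k}$. Distributivity and item (4) are immediate from the linearity of the finite sum $\sum_{k=0}^n$. For associativity, write $((\boldsymbol{a}\star\boldsymbol{b})\star\boldsymbol{c})_n=\sum_{m=0}^n\binom{n}{m}\bigl(\sum_{k=0}^m\binom{m}{k}a_kb_{m-k}\bigr)c_{n-m}$, apply the multinomial identity $\binom{n}{m}\binom{m}{k}=\frac{n!}{k!\,(m-k)!\,(n-m)!}$, and reindex by $i=k$, $j=m-k$, $\ell=n-m$ to obtain $\sum_{i+j+\ell=n}\frac{n!}{i!\,j!\,\ell!}a_ib_jc_\ell$. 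The symmetric computation starting from $\boldsymbol{a}\star(\boldsymbol{b}\star\boldsymbol{c})$ produces the identical triple sum, so the two sides agree.

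The only genuine obstacle is the index bookkeeping in the associativity step: one must check that the double-summation region $0\le k\le m\le n$ corresponds bijectively to the set of nonnegative triples $(i,j,\ell)$ with $i+j+\ell=n$, and that the multinomial identity is applied with the correct factorials. Everything else is a one-line reindexing or a formal consequence of the ring structure on power series. I would present the structural proof via $\Phi$ as the main argument, since it is short and conceptually transparent, and record the direct combinatorial verification as a remark.
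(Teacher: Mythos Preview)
Your proposal is correct. The direct combinatorial verification you give for associativity --- reindex the double sum via $\binom{n}{m}\binom{m}{k}=\dfrac{n!}{k!\,(m-k)!\,(n-m)!}$ to reach the symmetric triple sum $\sum_{i+j+\ell=n}\dfrac{n!}{i!\,j!\,\ell!}\,a_ib_jc_\ell$ --- is exactly the paper's own argument, which uses the equivalent identity $\binom{n}{k}\binom{k}{i}=\binom{n}{i}\binom{n-i}{k-i}$ and then substitutes $v=k-i$; the paper likewise declares (1), (3), (4) immediate. The one thing you add is the structural proof via the map $\Phi$ to products of power series. The paper does not use this as its proof, but it does remark, immediately before the lemma, that the convolution ``can be understood as multiplication of two series or two analytic functions''; your $\Phi$-argument simply promotes that remark to a formal proof. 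The convergence issue you flag is genuine if one insists on analytic functions, but it disappears if you work in the ring of formal power series, where the Cauchy product is defined without any convergence hypothesis.
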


\begin{proof} (1), (3) and (4) are easily obtained. We only show (2).
\begin{align*}((\boldsymbol{a}\star\boldsymbol{b})\star\boldsymbol{c})_n & = 
\sum_{k=0}^{n}\binom{n}{k}(\sum_{i=0}^{k}\binom{k}{i}a_ib_{k-i})c_{n-k}=\sum_{i=0}^{n}\binom{n}{i}a_i\sum_{k=i}^{n}\binom{n-i}{k-i}b_{k-i}c_{n-k} \\ & = \sum^{n}_{i=0}a_i\sum^{n}_{v=0}\binom{n-i}{v}b_{v}c_{n-i-v}=(\boldsymbol{a}\star(\boldsymbol{b}\star\boldsymbol{c}))_n.\end{align*}
\end{proof}
Define the identity as $\boldsymbol{id}=(1,0,0,\cdots)$. If $\boldsymbol{a}\star\boldsymbol{b}=\boldsymbol{id}$, we say $ \boldsymbol{b}$ is the inverse of $\boldsymbol{a}$. One can easily see that if the first component of a vector is not $0$, then it always has the inverse. If one treats the subset of all vectors with nonzero leading component as a group with the operation $\star $, then it's an abelian group.
 
\begin{example} Now we look at some special vectors in this space. \\
Define $\boldsymbol{j}=(j^0,j^1,j^2,\cdots), \ \boldsymbol{j^{-1}}=((-j)^0,(-j)^1,(-j)^2,\cdots),\ j\in \mathbb{C},\  j\neq0. $ One can see $\boldsymbol{j}=e^{jz}, \  \boldsymbol{j^{-1}}=e^{-jz}$, therefore the inverse of $\boldsymbol{j}$ is $\boldsymbol{j^{-1}}$ under $\star$. For convenience, we write $\boldsymbol{a}\boldsymbol{b}=(a_0b_0,a_1b_1,a_2b_2,\cdots)$. So $\boldsymbol{j^{-1}}=\boldsymbol{-1}\boldsymbol{j}.$
\end{example}
\subsection{Bernoulli number}
Recall that the generating function of Bernoulli numbers is \[\frac{z}{e^z-1}=\sum_{k=0}^{\infty}B_k\frac{z^k}{k!},\] and we denote it as $\boldsymbol{B}=(B_0,B_1,B_2,\cdots)$. 
The inverse of $\boldsymbol{B}$ is \[\frac{e^z-1}{z}=\sum_{k=0}^{\infty}\frac{1}{k+1}\frac{z^k}{k!},\]
 denoted $\boldsymbol{H}=(1,\frac{1}{2},\frac{1}{3},\cdots)$. So we have \begin{equation}\boldsymbol{B}\star\boldsymbol{H}=\boldsymbol{id}.\end{equation} 
 By comparing  components, we get \begin{equation}\sum_{k=0}^{n}\binom{n}{k}B_kH_{n-k}=\sum_{k=0}^{n}\binom{n}{k}B_k\frac{1}{n-k+1}=\delta_{0,n}.\end{equation} 
 Multiply $n+1$ on both sides, and we get  the exact recursive defintion of Bernoulli numbers \begin{equation}\sum_{k=0}^{n}\binom{n+1}{k}B_k=\delta_{0,n}(n+1), \end{equation} where $ \delta$ denotes the Kronecker delta. \\

We also note that \[\frac{-z}{e^{-z}-1}=\frac{ze^{z}}{e^{z}-1},\]which means 
\begin{equation}
\boldsymbol{-1}\boldsymbol{B}=\boldsymbol{B}\star\boldsymbol{1}.\end{equation}
Comparing their $n$-th components, we get
 \begin{equation}\sum_{k=0}^{n}\binom{n}{k}B_k=(-1)^nB_n.\end{equation} 
 Comparing $(1.3)$ and $(1.5)$, we get $B_{2k+1}=0, \  k\geq1$.
 
Considering \begin{equation}
\sum_{\boldsymbol{j=1}}^{\boldsymbol{m}}\boldsymbol{j}=\sum_{j=1}^{m}e^{jz}=e^{z}\frac{e^{mz}-1}{e^{z}-1}=e^{z}\frac{e^{mz}-1}{z}\frac{z}{e^{z}-1}=\boldsymbol{1}\star\boldsymbol{B}\star m\boldsymbol{H}\boldsymbol{m}.\end{equation} From $(1.4)$ we know  \begin{equation} \sum_{\boldsymbol{j=1}}^{\boldsymbol{m}}\boldsymbol{j}=\boldsymbol{-1}\boldsymbol{B}\star m\boldsymbol{H}\boldsymbol{m}.\end{equation} By comparing the components, we get \[\sum_{j=1}^{m}j^{n}=\sum_{k=0}^{n}\binom{n}{k}(-1)^{k}B_k\frac{m^{n-k+1}}{n-k+1}=\frac{1}{n+1}\sum_{k=0}^{n}\binom{n+1}{k}(-1)^{k}B_km^{n-k+1}.\]This is exactly the Faulhaber's Formula.

\subsection{Vector multiplication}
Define the vector multiplication to be \[\boldsymbol{a}\boldsymbol{b}=(a_0b_0,a_1b_1,a_2b_2,\cdots).\] 
Denote $e^{\frac{z}{j}}$ as $\boldsymbol{\frac{1}{j}}=(j^{0},j^{-1},j^{-2},\cdots), \  j \in \mathbb{C},\  j\neq0$. So $\boldsymbol{j}\boldsymbol{\frac{1}{j}}=\boldsymbol{1}.$
\begin{lemma} From the definition we can easily get the following equations:\begin{enumerate}
\item 
$ \boldsymbol{1}\boldsymbol{b}=\boldsymbol{b}$
\item $\boldsymbol{id}\star \boldsymbol{a}=\boldsymbol{a}$
\item 
$	\boldsymbol{j}(\boldsymbol{a}\star \boldsymbol{b})=\boldsymbol{j}\boldsymbol{a}\star\boldsymbol{j}\boldsymbol{b}$
\item $\boldsymbol{-1a}\star\boldsymbol{b}=\boldsymbol{-1}(\boldsymbol{a}\star\boldsymbol{-1b})$
\end{enumerate}
\end{lemma}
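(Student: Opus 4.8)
The plan is to verify each of the four identities by comparing $n$-th components, exactly the method used above in the proof of associativity of $\star$; all four reduce to one line of elementary algebra, so I will only indicate the mechanism rather than write everything out. A convenient alternative viewpoint, which makes every claim transparent, is the analytic-function dictionary already noted: $\boldsymbol{a}$ represents $f(z)=\sum_n a_n z^n/n!$, the convolution $\star$ is ordinary multiplication of functions, componentwise multiplication by $\boldsymbol{j}$ is the substitution $z\mapsto jz$, and multiplication by $\boldsymbol{-1}$ is the substitution $z\mapsto -z$.

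For (1) I would simply note that $\boldsymbol{1}=(1,1,1,\cdots)$, so $(\boldsymbol{1}\boldsymbol{b})_n=1\cdot b_n=b_n$; in the function picture this is $1\cdot f=f$. For (2), since $\boldsymbol{id}=(1,0,0,\cdots)$ has only its $0$-th coordinate nonzero, the convolution sum collapses and $(\boldsymbol{id}\star\boldsymbol{a})_n=\binom{n}{0}\cdot 1\cdot a_n=a_n$, i.e. the constant function $1$ is the multiplicative unit for $\star$.

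For (3) I would expand both sides at level $n$: the left side is $j^n\sum_{k=0}^n\binom{n}{k}a_k b_{n-k}$, while the right side is $\sum_{k=0}^n\binom{n}{k}(j^k a_k)(j^{n-k}b_{n-k})$, and since $j^k j^{n-k}=j^n$ factors out of the sum the two coincide; equivalently this is just $(fg)(jz)=f(jz)\,g(jz)$. For (4) I would use $(\boldsymbol{-1a})_k=(-1)^k a_k$: the left side at level $n$ equals $\sum_{k=0}^n\binom{n}{k}(-1)^k a_k b_{n-k}$, whereas the right side equals $(-1)^n\sum_{k=0}^n\binom{n}{k}a_k(-1)^{n-k}b_{n-k}=\sum_{k=0}^n\binom{n}{k}(-1)^{2n-k}a_k b_{n-k}$, and these agree because $(-1)^{2n-k}=(-1)^k$. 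In the function picture this is the identity $f(-z)g(z)=\bigl(f(z)g(-z)\bigr)$ after the substitution $z\mapsto -z$.

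There is no genuine obstacle here; the only place requiring a moment's care is the sign accounting in (4), namely recognizing that $(-1)^{2n-k}$ and $(-1)^k$ are the same. That is why the lemma is stated as following "easily" from the definitions, and I would keep the written proof to these few lines.
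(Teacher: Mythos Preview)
Your proposal is correct and is precisely the kind of direct componentwise verification the paper has in mind; the paper itself gives no proof beyond the phrase ``from the definition we can easily get,'' so there is nothing further to compare against.

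One tiny remark: in your function-picture gloss of (1) you write ``$1\cdot f=f$,'' but componentwise multiplication by $\boldsymbol{1}$ is not multiplication by the constant function $1$ (that would be $\boldsymbol{id}\star$); rather, as you yourself said, it is the substitution $z\mapsto 1\cdot z$, which is simply the identity map on functions.  Your componentwise argument is unaffected.  You might also observe that (4) is the special case $j=-1$ of (3) after replacing $\boldsymbol{b}$ by $\boldsymbol{-1b}$, which saves the separate sign bookkeeping.
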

From this lemma we know 
\begin{equation}
\boldsymbol{-1B}\star\boldsymbol{-1H}=\boldsymbol{-1}(\boldsymbol{B}\star\boldsymbol{H})=\boldsymbol{-1}\boldsymbol{id}=\boldsymbol{id}.
\end{equation} \\

\section{Appliaction}
In this section we will use the operation $\star$ to derivate some identities related to Bernoulli polynomial and Riemann zeta function.
\subsection{Polynomials}
We know the generating function of Bernoulli polynomial is \[\frac{ze^{xz}}{e^{z}-1}=\sum_{k=0}^{\infty}B_k(x)\frac{z^{k}}{k!}.\] 
This is to say \begin{equation}
\boldsymbol{B} \star \boldsymbol{ x}=(B_0(x),B_1(x),B_2(x),\cdots).\end{equation} 
We denote it as $\boldsymbol{B(x)}$. So the definition of Bernoulli polynomials becomes \[B_n(x)=\sum_{k=0}^{n}\binom{n}{k}B_kx^{n-k}.\]
 Let's find out the generating function of $s_{n}(m)=\sum_{j=1}^{m}j^{n}$, denoted $\boldsymbol{G}$, by using our operation.

First, let's take a look at $\boldsymbol{H}\star\boldsymbol{j}$. Its $n$-th component is \[\sum_{k=0}^{n}\binom{n}{k}\frac{j^{n-k}}{k+1}=\frac{1}{n+1}\sum_{k=0}^{n}\binom{n+1}{k+1}j^{n+1-(k+1)}=\frac{(j+1)^{n+1}-j^{n+1}}{n+1}.\] 
This is to say $ \boldsymbol{H} \star\boldsymbol{ j}=(j+1)\boldsymbol{H(j+1)}-j\boldsymbol{hj}$. 
By summing over $1$ through $m$, we get \[\sum_{\boldsymbol{j=1}}^{\boldsymbol{m}}\boldsymbol{H} \star\boldsymbol{ j}=(m+1)\boldsymbol{H(m+1)}-\boldsymbol{H}.\]
 Using distributive law we obtain
  \[\sum_{\boldsymbol{j=1}}^{\boldsymbol{m}}\boldsymbol{H }\star\boldsymbol{ j}=\boldsymbol{H} \star \sum_{\boldsymbol{j=1}}^{\boldsymbol{m}}\boldsymbol{j}.\]
  Recalling $\boldsymbol{B}\star\boldsymbol{H}=\boldsymbol{id}$, do the operation with $ \boldsymbol{B} $ to both sides, 
  and we get \[\sum_{\boldsymbol{j=1}}^{\boldsymbol{m}}\boldsymbol{j}=\boldsymbol{B}\star (m+1)\boldsymbol{H(m+1)}-\boldsymbol{id}=\frac{z}{e^{z}-1}\frac{e^{(m+1)z}-1}{z}-1.\] 
 Hence the generating function of $s_n(m)$ is \begin{equation}G(z,m)=\frac{e^{(m+1)z}-1}{e^{z}-1}-1=\frac{e^{(m+1)z}-e^{z}}{e^{z}-1}.\end{equation} 
We know $s_n(m)$ can be represented as $\frac{B_{n+1}(m+1)-B_{n+1}(1)}{n+1}
$, our operation can get this as well. \\
Multiplying $z$ on both sides of $(2.2)$, we have 
 \begin{equation}zG(z,m)=\frac{ze^{(m+1)z}}{e^{z}-1}-\frac{ze^{z}}{e^{z}-1}=\boldsymbol{B(m+1)}-\boldsymbol{B(1)},\end{equation}
 which means
  \[(0,s_0(m),2s_1(m),3s_2(m),\cdots)=(B_0(m+1),B_1(m+1),B_2(m+1),\cdots)-(B_0(1),B_1(1),B_2(1),\cdots).\]
   Comparing  the components, we get 
   \begin{equation}s_n(m)=\frac{B_{n+1}(m+1)-B_{n+1}(1)}{n+1}.\end{equation}
 Let's give the last two applications of this part. Although the operation can be used to get a lot of identities, we need to focus on the main goal of this paper.
From \[\boldsymbol{H}\star\boldsymbol{B(x)}=\boldsymbol{H}\star\boldsymbol{B}\star\boldsymbol{x}=\boldsymbol{x},\] we get \begin{equation}\sum_{k=0}^{n}\binom{n}{k}\frac{B_k(x)}{n-k+1}=x^n=\frac{1}{n+1}\sum_{k=0}^{n}\binom{n+1}{k}B_k(x).\end{equation} We also note $\boldsymbol{x}\star \boldsymbol{1}=\boldsymbol{(x+1)}$, then we obtain 
\begin{equation} \boldsymbol{B}\star\boldsymbol{x}\star\boldsymbol{1}=\boldsymbol{B(x)}\star \boldsymbol{1}=\boldsymbol{B(x+1)}.\end{equation}     This is \begin{equation}\sum_{k=0}^{n+1}\binom{n+1}{k}B_k(x)=B_{n+1}(x+1).\end{equation}
Combining $(2.5)$ and $(2.7)$, we get \begin{equation} x^{n}=\frac{B_{n+1}(x+1)-B_{n+1}(x)}{n+1}.\end{equation} 
Let's take a break to see what is happening. The reader may find our $\star$  equivalent to the multiplication of two functions, and yes it is. But when going deeper  you may discover that this operation helps you to X-ray  a given function. For example, if we merely look at the formula $\frac{e^{z}-1}{z}$, hardly do we get the inner information $(1,\frac{1}{2},\frac{1}{3},\cdots)$ as defined. \\ 

Using the generating function of Bernoulli polynomial, we get \[\boldsymbol{B(1-x)}=\boldsymbol{1}\star\boldsymbol{B(-x)}=\boldsymbol{1}\star\boldsymbol{B}\star\boldsymbol{-1x}.\]
 By $(1.4)$ we obtain
  \[\boldsymbol{B(1-x)}=\boldsymbol{-1B}\star\boldsymbol{-1x}=\boldsymbol{-1(B\star x)}=\boldsymbol{-1B(x)},\] 
 which yields $B_n(1-x)=(-1)^{n}B_{n}(x).$ 
 Also  \[\boldsymbol{-1B(-x)}=\boldsymbol{-1(B\star-1x)}=\boldsymbol{-1B \star x}\] and  
 \[\boldsymbol{-1B}=\boldsymbol{B}+(0,1,0,0,\cdots)\] 
 give \[ \boldsymbol{-1B}\star\boldsymbol{x}=\boldsymbol{B}\star\boldsymbol{x}+(0,1,0,0,\cdots)\star\boldsymbol{x}.\] 
 This is to say  $(-1)^{n}B_n(-x)=B_n(x)+nx^{n-1}$.\\

The point of all these applications is to understand what the operation actually is. Now we move on to the next part.
\subsection{$\zeta$-function}
The relation between $\zeta(1-k)$ and $B_k$ is usually derivated by using functional equation of $\zeta(s)$ and Euler's work about $\zeta(2k)$(see\cite{1}). Now we get this in our way.

Recall that Riemann zeta function was defined as \[\zeta(s)=\sum_{n=1}^{\infty}\frac{1}{n^s},\] for $Re(s)>1$,  and could be extended to an analytic function for all $s\in \mathbb{C}$, except for $s=1$, where it has a simple pole. \cite{3} provides an easy way of extension without using functional equation. The conclusion is 
\begin{equation}
\zeta(s)=1+\frac{1}{s-1}-\sum_{r=1}^{m}\frac{s(s+1)\cdots(s+r-1)}{(r+1)!}(\zeta(s+r)-1)-\frac{s(s+1)\cdots(s+m)}{(m+1)!}\sum_{n=1}^{\infty}\int_{0}^{1}\frac{u^{m+1}du}{(u+n)^{s+m+1}}, \end{equation}
and the sum on the right hand converges for $Re(s)>-m$. If we put $s=1-m$ in $(2.9)$ and note that for $r=m,\  \zeta(s+r)$ has a simple pole with residue $1$, we obtain \begin{equation}
\zeta(1-m)=1-\frac{1}{m}+\frac{(-1)^m}{m(m+1)}-\sum_{r=1}^{m-1}(-1)^r\binom{m-1}{r}\frac{1}{r+1}(\zeta(1-m+r)-1).
\end{equation}
Multiplying $m$ to both sides: \begin{equation} m\zeta(1-m)=m-1+\frac{(-1)^m}{m+1}-\sum_{r=1}^{m-1}(-1)^r\binom{m}{r+1}(\zeta(1-m+r)-1)\end{equation} 
\[ =\frac{(-1)^m}{m+1}+\sum_{r=1}^{m-1}(-1)^{r+1}\binom{m}{r+1}\zeta(1+r-m)-(\sum_{v=2}^{m}\binom{m}{v}(-1)^{v}-m+1).\]  Note in addition that \[\sum_{v=2}^{m}\binom{m}{v}(-1)^{v}-m+1=\sum_{v=0}^{m}\binom{m}{v}(-1)^{v}=0,\]
and we get 
\begin{theorem} For integers $m>0$,
\begin{equation}\frac{(-1)^{m+1}}{m+1}=\sum_{i=1}^{m}(-1)^{i}\binom{m}{i}\zeta(i-m).\end{equation} 
\end{theorem}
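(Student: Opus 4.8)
The plan is to read the identity off the analytic continuation formula $(2.9)$ by evaluating it at the single point $s=1-m$. Since $1-m>-m$, formula $(2.9)$ is valid there, so with the same value of $m$ its right-hand side equals $\zeta(1-m)$; the only subtlety is that the term $r=m$ of the finite sum is of the indeterminate shape $0\cdot\infty$ at this point and must be read as a limit, which is legitimate because both sides of $(2.9)$ are analytic on a punctured neighbourhood of $s=1-m$ while the left-hand side is regular there.

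First I would isolate the term $r=m$. At $s=1-m$ the product $s(s+1)\cdots(s+m-1)$ contains the factor $s+m-1$, which vanishes, whereas $\zeta(s+m)-1$ has a simple pole arising from $\zeta(s+m)=\tfrac{1}{s+m-1}+(\gamma-1)+\cdots$; multiplying, the singular factor cancels and the limit of this term equals
\[
-\frac{1}{(m+1)!}\prod_{j=0}^{m-2}(1-m+j)=-\frac{(-1)^{m-1}(m-1)!}{(m+1)!}=\frac{(-1)^m}{m(m+1)}.
\]
For $1\le r\le m-1$ the coefficient is regular and evaluates to $\tfrac{1}{(r+1)!}\prod_{j=0}^{r-1}(1-m+j)=(-1)^r\binom{m-1}{r}\tfrac{1}{r+1}$; the constant $1+\tfrac{1}{s-1}$ becomes $1-\tfrac1m$; and the final integral term carries the factor $s(s+1)\cdots(s+m)$, which again contains $s+m-1=0$ and hence drops out. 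Assembling these evaluations is exactly formula $(2.10)$.

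The rest is elementary bookkeeping. Multiplying $(2.10)$ by $m$ and using $\tfrac{m}{r+1}\binom{m-1}{r}=\binom{m}{r+1}$ gives $(2.11)$. Splitting the factor $\zeta(1-m+r)-1$ leaves the purely numerical constants $m-1+\tfrac{(-1)^m}{m+1}+\sum_{r=1}^{m-1}(-1)^r\binom{m}{r+1}$; reindexing $v=r+1$ turns the last sum into $-\sum_{v=2}^m(-1)^v\binom{m}{v}=-(m-1)$ since $\sum_{v=0}^m(-1)^v\binom{m}{v}=0$, so these constants collapse to $\tfrac{(-1)^m}{m+1}$. What remains is $m\zeta(1-m)=\tfrac{(-1)^m}{m+1}-\sum_{r=1}^{m-1}(-1)^r\binom{m}{r+1}\zeta(1-m+r)$. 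Reindexing the remaining sum with $i=r+1$ (it runs over $2\le i\le m$, where $\zeta$ is regular since $i-m<1$) and absorbing the left-hand $m\zeta(1-m)$ as the missing $i=1$ term through $m=-(-1)^1\binom{m}{1}$ produces $0=\tfrac{(-1)^m}{m+1}+\sum_{i=1}^m(-1)^i\binom{m}{i}\zeta(i-m)$, which rearranges to the asserted identity.

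I expect the one genuinely delicate step to be the $r=m$ term: one must justify the passage to the limit in $(2.9)$ and compute the finite part of the $0\cdot\infty$ product with the correct sign, i.e.\ $\prod_{\ell=1}^{m-1}(\ell-m)=(-1)^{m-1}(m-1)!$. Everything afterward is routine binomial algebra. As sanity checks one can test $m=1$, where the sum collapses and the identity reads $\tfrac12=-\zeta(0)$, and $m=2$, where it reads $-\tfrac13=-2\zeta(-1)+\zeta(0)=\tfrac16-\tfrac12$; one could also verify it a posteriori from $\zeta(1-k)=-B_k/k$ (for $k\ge2$), $\zeta(0)=-\tfrac12$, and the Bernoulli recursion $(1.3)$, although inside the logical flow of this paper that would be circular.
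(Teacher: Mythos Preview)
Your proposal is correct and follows essentially the same route as the paper: evaluate $(2.9)$ at $s=1-m$, handle the $r=m$ term via the residue of $\zeta$ at $1$, multiply by $m$, and collapse the binomial constants via $\sum_{v=0}^{m}(-1)^{v}\binom{m}{v}=0$. You are simply more explicit than the paper about two points it glosses over---the limit computation for the $r=m$ term and the vanishing of the trailing integral term through the factor $s+m-1$---but the structure of the argument is identical.
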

 If we denote $(\zeta(0),\zeta(-1),\zeta(-2),\cdots)$ as $\boldsymbol{\zeta(-s)}$, and we treat each side of $(2.12)$ as $m$-th component of a vector, we  obtain by using our operation  \begin{theorem}   For all nonnegative integers m
 	\begin{equation} \delta_{m,0}-\frac{(-1)^{m}}{m+1}=\sum_{i=0}^{m}(1-\delta_{m,0})(-1)^{i}\binom{m}{i}\zeta(i-m), \end{equation}
 which is equivalent to \begin{equation}
 \boldsymbol{id}-(\boldsymbol{-1H})=\boldsymbol{-1H}\star(0,-1,0,0,\cdots)\star\boldsymbol{\zeta(-s)}.
 \end{equation}
\end{theorem}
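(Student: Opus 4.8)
The plan is to prove the vector identity $(2.14)$ by matching Taylor coefficients on its two sides: once the right‑hand side is put into a transparent form, the equality in degree $m$ turns out to be exactly the scalar identity $(2.12)$ of Theorem~2.1 for $m\geq 1$, together with a vacuous $m=0$ case; and since equality of two vectors is the same as equality of all of their coefficients, this simultaneously yields $(2.13)$.

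First I would collapse the two leftmost factors on the right of $(2.14)$. The vector $(0,-1,0,0,\cdots)$ represents the function $-z$ and $\boldsymbol{-1H}$ represents $\frac{1-e^{-z}}{z}$, and since $\star$ is multiplication of the underlying analytic functions their convolution represents $e^{-z}-1$; that is,
\[\boldsymbol{-1H}\star(0,-1,0,0,\cdots)=\boldsymbol{-1}-\boldsymbol{id}.\]
This also follows from a one‑line direct check: $(0,-1,0,0,\cdots)$ has its single nonzero entry in slot $1$, so $\bigl(\boldsymbol{-1H}\star(0,-1,0,0,\cdots)\bigr)_n=\binom{n}{1}\frac{(-1)^{n-1}}{n}(-1)=(-1)^n$ for $n\geq 1$ and $0$ for $n=0$. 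Substituting this back and using the identity law $\boldsymbol{id}\star\boldsymbol{\zeta(-s)}=\boldsymbol{\zeta(-s)}$ (Lemma~1.4(2)), the right‑hand side of $(2.14)$ becomes $\boldsymbol{-1}\star\boldsymbol{\zeta(-s)}-\boldsymbol{\zeta(-s)}$.

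Next I would read off coefficients in degree $m$. Expanding the convolution, $\bigl(\boldsymbol{-1}\star\boldsymbol{\zeta(-s)}\bigr)_m=\sum_{i=0}^{m}\binom{m}{i}(-1)^i\zeta(i-m)$, whose $i=0$ term is $\zeta(-m)=\bigl(\boldsymbol{\zeta(-s)}\bigr)_m$; hence subtracting $\boldsymbol{\zeta(-s)}$ deletes exactly that term and leaves $\sum_{i=1}^{m}\binom{m}{i}(-1)^i\zeta(i-m)$, the right‑hand side of $(2.13)$ (the removed summand $\zeta(-m)$ being the one piece not described by $(2.12)$). On the other side, $\bigl(\boldsymbol{id}-\boldsymbol{-1H}\bigr)_m=\delta_{m,0}-\frac{(-1)^m}{m+1}$, which for $m\geq 1$ is $\frac{(-1)^{m+1}}{m+1}$ and for $m=0$ is $0$. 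Comparing the two: for $m\geq 1$ the coefficient equality is precisely $(2.12)$ of Theorem~2.1, and for $m=0$ both sides vanish (the sum $\sum_{i=1}^{0}$ being empty). Therefore all coefficients agree, which is the vector identity $(2.14)$, and hence also the scalar form $(2.13)$.

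I do not foresee a genuine obstacle here: the whole argument is a translation of $(2.12)$ through the $\star$ / vector‑product algebra already in place. The step that most repays care is the cancellation in the previous paragraph — it is the hidden $-\boldsymbol{id}$ inside $\boldsymbol{-1H}\star(0,-1,0,0,\cdots)=\boldsymbol{-1}-\boldsymbol{id}$, equivalently the extra $-\boldsymbol{\zeta(-s)}$, that removes the diagonal term $\zeta(-m)$ so that the convolution sum starts at $i=1$ and lines up with Theorem~2.1 exactly; keeping this index shift straight, and the sign identity $-\frac{(-1)^m}{m+1}=\frac{(-1)^{m+1}}{m+1}$, is essentially all there is to do.
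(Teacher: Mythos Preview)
Your proof is correct and follows exactly the paper's approach: the paper's one-line proof simply observes that the right-hand side of $(2.14)$ is $e^{-z}-1$ multiplied by the generating function of $\boldsymbol{\zeta(-s)}$, and you spell this out as $\boldsymbol{-1H}\star(0,-1,0,0,\cdots)=\boldsymbol{-1}-\boldsymbol{id}$ and then read off coefficients to land on Theorem~2.1. Your version is just a fully detailed rendition of the same idea.
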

\begin{proof} We just need to be clear that the right hand of $(2.14)$ is $e^{-z}-1$ multiplyed to the generating function of $\zeta(-s)$.
	\end{proof}
We do the convolution to both sides of  $(2.14)$ with  $\boldsymbol{-1B}$ , then
\begin{equation}\boldsymbol{-1B}-(\boldsymbol{id})=\boldsymbol{id}\star(0,-1,0,0,\cdots)\star\boldsymbol{\zeta(-s)}.\end{equation}
 Comparing their components, we have \begin{equation}(-1)^{n}B_n-\delta_{0,n}=-n\zeta(1-n).\end{equation}
Since $ B_{2k+1}=0, \ k>0$,  we get the well-known relaton $\zeta(1-n)=-\frac{B_n}{n},\ n>0$.
\subsection{Hurwitz zeta function}
This operation can be used for Hurwitz zeta function to deduce a relation with Bernoulli polynomial. For $0<a\leq1,$ the Hurwitz zeta function $\zeta(s,a)$ is defined by the series \[\zeta(s,a)=\sum_{n=0}^{\infty}\frac{1}{(n+a)^{s}},\] which converges absolutely for $Re(s)>1$. We use the same trick on $\zeta(s)$ to obtain (see\cite{3}) \begin{equation}
\zeta(s,a)=a^{-s}+\frac{a^{1-s}}{s-1}-\sum_{r=1}^{m}\frac{s^{(r)}}{(r+1)!}(\zeta(s+r,a)-a^{-s-r})+\frac{s^{(m+1)}}{(m+1)!}\sum_{n=0}^{\infty}\int_{0}^{1}\frac{u^{m+1}}{(u+n+a)^{s+m+1}}
\end{equation} where $s^{(n)}$ is rising factorial defined as $s^{(n)}=s(s+1)\cdots(s+n-1)$. Putting $s=1-m$, and  noting for $r=m$ that $\zeta(s+r,a)$ has a simple pole at $s=1-m$, we obtain \begin{equation}
\zeta(1-m,a)=a^{m-1}-\frac{a^{m}}{m}+\frac{(-1)^{m}}{m(m+1)}-\sum_{r=1}^{m-1}(-1)^{r}\frac{(m-1)(m-2)\cdots(m-r)}{(r+1)!}(\zeta(s+r,a)-a^{-s-r}).
\end{equation} 
Multiplying $m$ on both sides, we get \begin{equation}
m\zeta(1-m,a)=\frac{(-1)^{m}}{m+1}+\sum_{r=1}^{m-1}(-1)^{r+1}\binom{m}{r+1}(\zeta(-m+r+1,a)-\sum_{v=0}^{m}\binom{m}{v}(-1)^{v}a^{m-v}.
\end{equation} 
Therefore we have \begin{theorem} For integers $m>0$, \begin{equation}
\frac{(-1)^{m+1}}{m+1}=\sum_{v=1}^{m}(-1)^{v}\binom{m}{v}\zeta(v-m,a)-(a-1)^{m}.\end{equation} \end{theorem}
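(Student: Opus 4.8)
The plan is to obtain the identity as a purely algebraic rearrangement of equation $(2.20)$, imitating the passage from $(2.11)$ to Theorem $2.1$. All the analytic work — the continuation formula $(2.17)$, the substitution $s=1-m$, and the extraction of the residue $1$ of $\zeta(1,a)$ at the boundary term $r=m$ — has already been carried out in reaching $(2.20)$, so from here on no further analysis is needed.

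First I would simplify the last sum in $(2.20)$ by the binomial theorem:
\[\sum_{v=0}^{m}\binom{m}{v}(-1)^{v}a^{m-v}=(a-1)^{m}.\]
Substituting this into $(2.20)$ gives
\[m\zeta(1-m,a)=\frac{(-1)^{m}}{m+1}+\sum_{r=1}^{m-1}(-1)^{r+1}\binom{m}{r+1}\zeta(r+1-m,a)-(a-1)^{m}.\]
Next I would reindex the remaining sum by $v=r+1$, so that it runs over $v=2,\dots,m$ with general term $(-1)^{v}\binom{m}{v}\zeta(v-m,a)$. The absent $v=1$ term equals $(-1)^{1}\binom{m}{1}\zeta(1-m,a)=-\,m\zeta(1-m,a)$, i.e. precisely the negative of the left-hand side. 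Hence transposing $m\zeta(1-m,a)$ to the right absorbs it as the $v=1$ summand, the left-hand side collapses to $0$, and we are left with
\[0=\frac{(-1)^{m}}{m+1}+\sum_{v=1}^{m}(-1)^{v}\binom{m}{v}\zeta(v-m,a)-(a-1)^{m},\]
which rearranges at once to the claimed formula $\dfrac{(-1)^{m+1}}{m+1}=\sum_{v=1}^{m}(-1)^{v}\binom{m}{v}\zeta(v-m,a)-(a-1)^{m}$.

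There is essentially no hard step here; the only points demanding care are checking that the $r=m$ boundary term and its simple pole were correctly accounted for in deriving $(2.20)$ (the residue of $\zeta(s+r,a)$ at $s=1-m$, $r=m$, being $1$, exactly as for the Riemann zeta function), and verifying that the binomial coefficients and signs match up after the index shift $v=r+1$. As a consistency check, putting $a=1$ gives $\zeta(s,1)=\zeta(s)$ and $(a-1)^{m}=0$, so Theorem $2.5$ specializes to Theorem $2.1$.
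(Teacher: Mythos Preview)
Your proposal is correct and follows exactly the paper's route: the paper presents $(2.17)$--$(2.19)$ and then simply writes ``Therefore we have'' before stating the theorem, so you are supplying precisely the binomial-theorem collapse and the index shift $v=r+1$ that the paper leaves implicit. Note only a labelling slip: the equation you are manipulating (with the sum $\sum_{v=0}^{m}\binom{m}{v}(-1)^{v}a^{m-v}$ on the right) is $(2.19)$ in the paper's numbering, not $(2.20)$, which is the theorem itself.
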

Using the same method from theorem 2.2, also noting that $\zeta(0,a)=1-\frac{3a}{2},$  we have \begin{equation}-\boldsymbol{-1H}=(0,-1,1,-1,\cdots)\star\boldsymbol{\zeta(-s,a)}-\boldsymbol{(a-1)},\end{equation} where $\boldsymbol{\zeta(-s,a)}=(\zeta(0,a),\zeta(-1,a),\zeta(-2,a),\cdots)$.
Doing convolution to both sides of $(2.21)$ with $\boldsymbol{-1B}$, together with 
 \[\boldsymbol{-1B}=\frac{-z}{e^{-z}-1}=\frac{ze^{z}}{e^{z}-1},\ \boldsymbol{(a-1)}=e^{(a-1)z},\] 
  we have \begin{equation}
\boldsymbol{B(a)}-\boldsymbol{id}=(0,-1,0,0,\cdots)\star\boldsymbol{\zeta(-s,a)}.
\end{equation} Comparing their components we get for $n>0$, 
\begin{equation} 
-n\zeta(1-n,a)=B_n(a).
\end{equation} \\

\section{Forward difference}
Recall that forward difference is $\Delta_h[f](x)=f(x+h)-f(x)$. When $h=1$, we denote it as 
\begin{equation}
\Delta[f](x)=f(x+1)-f(x).
\end{equation} 
We  can also deduce the $n$-th order forward difference by induction, that is,
\[
\Delta^{n}[f](x)=\Delta\{\Delta^{n-1}[f](x)\}=\Delta^{n-1}[f](x+1)-\Delta^{n-1}[f](x).\]
Then we get 
\begin{equation}
\Delta^{n}[f](x)=\sum_{i=0}^{n}\binom{n}{i}(-1)^{n-i}f(x+i).
\end{equation}
Look at the above equation. If we define a sequence as $(\Delta^{0}[f](x),\Delta^{1}[f](x),\cdots,\Delta^{n}[f](x),\cdots)$, denoted $\boldsymbol{\Delta f}$, we can connect it with our operation $\star$ and get 
\begin{equation}
\boldsymbol{\Delta f}=(f(x),f(x+1),\cdots,f(x+n),\cdots)\star(1,-1,\cdots,(-1)^{n},\cdots).
\end{equation}

Now consider $f(x)$ to be $\Gamma(x+1)(x+1)^{-s}$, we have 
\begin{align}
&(\Gamma(x+1)(x+1)^{-s},\Delta\Gamma(x+1)(x+1)^{-s},\cdots,\Delta^{n}\Gamma(x+1)(x+1)^{-s},\cdots)\\
&=(\Gamma(x+1)(x+1)^{-s},\cdots,\Gamma(x+n)(x+n)^{-s},\cdots)\star(1,\cdots,(-1)^{n},\cdots).\end{align}
Notice that $ (1,\cdots,(-1)^{n},\cdots)=e^{-z}$. If we let $x=0$ and $z=1$, then 
\begin{equation}
\sum_{n=0}^{\infty}\frac{\Delta^{n}\Gamma(1)(1)^{-s}}{n!}=e^{-1}\sum_{n=0}^{\infty}\frac{1}{n^{s}}.
\end{equation} 
This is equvalent to 
\begin{theorem}
\begin{equation}
\zeta(s)=e\sum_{n=0}^{\infty}\sum_{i=0}^{n}(-1)^{n-i}\frac{1}{(n-i)!(1+i)^{s}}. 
\end{equation}\\
\end{theorem}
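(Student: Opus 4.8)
The plan is to recognize $(3.7)$ as the value at $z=1$ of a $\star$-identity, exactly in the spirit in which the $\zeta$-identities of Section~2 were obtained. First I would take $(3.3)$ with $x=0$ and $f(x)=\Gamma(x+1)(x+1)^{-s}$, the situation of $(3.4)$--$(3.5)$. Since $\Gamma(n+1)=n!$, the left-hand factor is the sequence of values $\bigl(f(0),f(1),f(2),\dots\bigr)=\bigl(n!\,(n+1)^{-s}\bigr)_{n\ge0}$, so the analytic function it represents is
\[
\sum_{n=0}^{\infty}n!\,(n+1)^{-s}\,\frac{z^{n}}{n!}=\sum_{n=0}^{\infty}\frac{z^{n}}{(n+1)^{s}} .
\]
The role of the factor $\Gamma(x+1)$ is precisely to cancel the $1/n!$ in the expansion, turning it into an ordinary power series whose value at $z=1$ is $\zeta(s)$. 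The right-hand factor is $(1,-1,1,\dots)$, which the text identifies with $e^{-z}$. Since, as noted right after Definition~1.1, the convolution $\star$ corresponds to multiplication of the associated analytic functions, $(3.3)$ says that $\boldsymbol{\Delta f}=\bigl(\Delta^{n}[f](0)\bigr)_{n\ge0}$ is represented by $e^{-z}\sum_{n\ge0}(n+1)^{-s}z^{n}$.

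It then remains to read off coefficients and evaluate at $z=1$. From $(3.2)$ we have $\Delta^{n}[f](0)=\sum_{i=0}^{n}\binom{n}{i}(-1)^{n-i}\,i!\,(i+1)^{-s}$, hence
\[
\frac{\Delta^{n}[f](0)}{n!}=\sum_{i=0}^{n}\frac{(-1)^{n-i}}{(n-i)!\,(i+1)^{s}}
\]
by the identity $\tfrac{1}{n!}\binom{n}{i}i!=\tfrac{1}{(n-i)!}$; summing over $n$ produces the double series in $(3.7)$. On the other hand, evaluating the representing function at $z=1$ gives $e^{-1}\sum_{n\ge0}(n+1)^{-s}=e^{-1}\zeta(s)$. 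Equating the two and multiplying by $e$ yields the claimed formula.

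The delicate point --- and the step I expect to be the main obstacle --- is the passage to $z=1$. The assertion that $\boldsymbol{\Delta f}$ is represented by $e^{-z}\sum(n+1)^{-s}z^{n}$ is, a priori, only an identity of power series near $z=0$, and evaluating a product of two such series term by term at $z=1$ is a statement about Cauchy products. For $\operatorname{Re}(s)>1$ it is legitimate: the series $\sum_{n\ge0}(n+1)^{-s}$ and $\sum_{n\ge0}(-1)^{n}/n!$ both converge absolutely, so their Cauchy product converges to the product of the sums, namely $\zeta(s)\,e^{-1}$, which is exactly what the term-by-term evaluation computes. Thus $(3.7)$ holds throughout $\operatorname{Re}(s)>1$, the region where the Dirichlet series for $\zeta$ converges; one should note that it cannot hold verbatim for all $s$, since, for instance, at $s=0$ the inner sum equals $\sum_{i=0}^{n}(-1)^{n-i}/(n-i)!\to e^{-1}\ne0$ and the outer series diverges. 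To push the identity onto a larger half-plane one would have to verify separately that the double series converges there, that its sum is holomorphic, and then invoke the identity theorem against the equality already established for $\operatorname{Re}(s)>1$.
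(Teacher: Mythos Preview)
Your argument is correct and follows the paper's approach essentially line for line: apply $(3.3)$ with $f(x)=\Gamma(x+1)(x+1)^{-s}$ at $x=0$, identify the two $\star$-factors with $\sum_{n\ge0}(n+1)^{-s}z^{n}$ and $e^{-z}$, and evaluate at $z=1$ to obtain $(3.6)$ and hence $(3.7)$. Your discussion of the passage to $z=1$ via absolute convergence and the Cauchy product for $\operatorname{Re}(s)>1$ is in fact more careful than the paper, which simply sets $z=1$ without comment.
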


\section{Calculus}
In this section it comes to my favorite part, where we  build a new conception about derivation and integral.

\subsection{Derivation and integral}
We learn from elementary calculus that derivation and integral are mutually inverse operations. Amazingly, using our conception of vector space, we will redefine derivation and integral.

If an analytic function $f(z)=\sum_{n=1}^{\infty}a_n\frac{z^{n}}{n!}$ is denoted $(0,a_1,a_2,a_3,\cdots)$, then \[ f'(z)=\sum_{n=0}^{\infty}a_{n+1}\frac{z^{n}}{n!}=(a_1,a_2,a_3,a_4,\cdots),\] and \[\int_{\gamma}f(z)=\sum_{n=1}^{\infty}\int_{\gamma}a_{1}\frac{z^{n}}{n!}=\sum_{n=2}^{\infty}a_{n-1}\frac{z^{n}}{n!}=(0,0,a_0,a_1,a_2,\cdots),\] 
where $\gamma$ is a smooth curve connecting $0$ and $z$ in the domain of $f(z)$. Now the reader may discover the fact that is derivation and integral are just like moving components left and right!
	
 Paper \cite{2} shows an interesting relation which is \begin{equation}
 	\int_{0}^{1}S_n(x)=\zeta(-n), n\in \mathbb{N},
 \end{equation} where $S_n(m)=\sum_{j=1}^{n-1}j^{m}$. We will explain this relation in the rest of this paper.

 Using Faulhaber’s Formula, we have \[S_n(x)=\frac{1}{n+1}\sum_{k=0}^{n}\binom{n+1}{k}(-1)^{k}B_k(x-1)^{n-k+1}.\] 
So we get
 \begin{equation}S_n(x)=\sum_{k=0}^{n}\frac{n!}{k!}(-1)^{k}B_k\frac{(x-1)^{n+1-k}}{(n+1-k)!}.\end{equation} 
 Considering the Taylor series of an analytic function at $1$, we use the set $\{\frac{(z-1)^{k}}{k!}\mid k=0,1,2,\cdots\}$ as a basis. We can denote the right side of $(4.2)$ as \[(0,(-1)^{n}B_n,(-1)^{n-1}nB_{n-1},\cdots,n!B_0,0,0,\cdots).\]
 Then $\int_{0}^{x}S_n(x)$ indicates moving one step to the right, i.e. \begin{equation}(0,0,(-1)^{n}B_n,(-1)^{n-1}nB_{n-1},\cdots,n!B_0,0,0,\cdots).\end{equation} 
 Still we have \[\int_{0}^{1}\frac{(x-1)^{k}}{k!}=-\frac{(-1)^{k+1}}{(k+1)!}.\] 
 Combining this with the vector (4.3) we get \begin{align*}\int_{0}^{1}S_n(x)=\frac{(-1)^{n+1}}{(n+1)(n+2)}\sum_{k=0}^{n}\binom{n+2}{k}B_k&=\frac{(-1)^{n}(n+2)B_{n+1}}{(n+1)(n+2)} \\ &=\frac{(-1)^{n}B_{n+1}}{n+1}=\zeta(-n),\end{align*}when $n\in \mathbb{N}$.

Of course there are other proofs of this relation, but we are using this to point out the new conception of derivation and integral.

\begin{remark}
The reader may think about it  why we need the first component of Taylor series to be zero to induce the conception of derivation and integral.
\end{remark}

\bibliographystyle{plain}

\begin{thebibliography}{10}
	
	\bibitem{1}Apostol, Tom Mike, \textit{Introduction to Analytic Number Theory},
	Springer-Verlag New York, 1976.	
	\bibitem{2}Miná\v e, Ján, \textit{A remark on the values of the Riemann zeta function}, Expo. Math. \textbf{12} (1994), 459--462.
		\bibitem{3}Murty, M. Ram and Reece, Marilyn, \textit{A simple derivation of $\zeta(1-K)=-B_K/K$}, Funct. Approx. Comment. Math. \textbf{28} (2000), 141--154.
			\bibitem{4}Tamaroff, Pedro, (https://math.stackexchange.com/users/23350/pedro-tamaroff), \textit{Explicit formula for Bernoulli numbers by using only the recurrence relation}, URL (version: 2013-08-20): https://math.stackexchange.com/q/450839
			
	
\end{thebibliography}

\end{document}